\documentclass[10pt]{article}

\usepackage[utf8]{inputenc}
\usepackage[T1]{fontenc}  

\usepackage{amssymb}
\usepackage{amsmath}
\usepackage{amsthm}
\usepackage{amscd}
\usepackage{url}
\usepackage{nicefrac}
\usepackage{xcolor}

\usepackage{mathrsfs}

\usepackage{tcolorbox}
\usepackage{graphicx}

\usepackage{stmaryrd}
\usepackage{mathrsfs}
\usepackage{amscd}
\usepackage{bbm}
\usepackage{enumitem}
\usepackage{fncylab}
\usepackage{xcolor}
\usepackage{url}
\usepackage{leftidx}
\usepackage{calc}
\usepackage[width=\textwidth]{caption}
\usepackage[normalem]{ulem}
\usepackage{dsfont}

\definecolor{boubelcolor}{rgb}{.65,0.05,0}

\DeclareUnicodeCharacter{2260}{\colorlet{memoireboubel}{.}\color{boubelcolor}} 
\DeclareUnicodeCharacter{00B1}{\color{memoireboubel}{}} 
\DeclareUnicodeCharacter{00A3}{\colorlet{memoirejuillet}{.}\color{orange}} 
\DeclareUnicodeCharacter{00A7}{\color{memoirejuillet}{}} 
%
%

\theoremstyle{plain}
\newtheorem{them}{Theorem}[section]
\newtheorem{pro}[them]{Proposition}
\newtheorem{cor}[them]{Corollary}
\newtheorem{lem}[them]{Lemma}

\theoremstyle{definition}
\newtheorem{defi}[them]{Definition}

\theoremstyle{remark}

\DeclareMathOperator{\Jac}{|Jac|}

\newcommand{\E}{\mathcal{E}}

\newcommand{\R}{\mathbb{R}}

\newcommand{\N}{\mathcal{N}}

\newcommand{\dd}{\mathrm{d}}
\newcommand{\eps}{\varepsilon}

\newcommand{\X}{\mathfrak{X}}

\newcommand{\I}{\mathcal{I}}
\newcommand{\M}{\mathcal{M}}

\renewcommand{\d}{\mathsf{d}}

\title{SubRiemannian structures do not satisfy Riemannian Brunn--Minkowski inequalities.}
\author{Nicolas Juillet}
\date{}

\providecommand{\keywords}[1]{\textbf{\textit{Keywords---}} #1}
\providecommand{\MSC}[1]{\textbf{\textit{MSC (2010)---}} #1}

\newcommand{\Addresses}{{
  \bigskip
  \footnotesize

\noindent
  Nicolas~Juillet, \textsc{IRMA -- UMR 7501 Universit\'e de Strasbourg et CNRS\\
    7 rue Ren\'e Descartes\\
    67000 STRASBOURG (FRANCE)}
\par\nopagebreak
\noindent
  \textit{E-mail address}: \texttt{nicolas.juillet@math.unistra.fr}

}}

\begin{document}
\maketitle

\begin{abstract}
We prove that no Brunn--Minkowski inequality from the Riemannian theories of curvature--dimension and optimal transportation can be satisfied by a strictly subRiemannian structure. Our proof relies on the same method as for the Heisenberg group together with new investigations by Agrachev, Barilari and Rizzi on ample normal geodesics of subRiemannian structures and the geodesic dimension attached to them.
\end{abstract}
\keywords{Brunn--Minkowski inequality, Normal geodesic, Ricci Curvature, SubRiemannian structure}\\
\MSC{51F99, 53B99, 28A75, 28C10}

\medskip

\section{Statement of the results}\label{page1}
We consider subRiemannian structures in the rank-varying setting of \cite[Chapter 5]{ABR}. This definition is the usual definition based on a metric tensor defined on a vector bundle $\mathbb{U}$ above the tangent space $TM$ to a smooth\footnote{meaning $\mathcal{C}^\infty$ as in the reminder of the paper.} connected manifold $M$ of dimension $n\geq 3$. We assume that the rank-varying distribution $x\mapsto \Delta_x$ satisfies the Hörmander condition, which permits us to define the subRiemannian distance $\d$. This makes $(M,\d)$ a metric space that we moreover assume to be complete. As a consequence of it the length space $(M,\d)$ becomes a geodesic space. Finally, in order to eliminate typically Riemannian behaviours our structure is assumed to be \emph{strictly} subRiemannian in the sense that the rank of the distribution $x\in M\to \mathrm{dim}(\Delta_x)$ is uniformly bounded by $n-1$. We finally obtain a geodesic metric measure space $(M,\d,\mu)$ after we let $\mu$ be a smooth positive volume form. 

As an example the reader may consider the rank-fixed setting of~\cite{Ri_book} where $M=(M,g)$ is a smooth complete connected Riemannian manifold of dimension $n\geq 3$ and  $\Delta$ a distribution of rank $k\leq n-1$ satisfying the bracket generating condition of H\"ormander. With the subRiemannian distance $\d$ obtained by restricting $g$ to $\Delta$, we obtain a complete (see Proposition 2.7 in \cite{Ri_book}) geodesic metric space. Note that in this example $\mu$ may be the Riemannian volume attached to $g$ but one may choose another measure with positive and smooth density.

We aim at finding two sets $A$ and $B$ with $\mu(A)\simeq \mu(B)$ and a ratio $r\in]0,1[$ such that the set $\{x\in M:\,\exists(a,b)\in A\times B,\,r^{-1}\d(a,x)=(1-r)^{-1}\d(x,b)=\d(a,b)\}$ that we call $M_r$ satisfies $\mu(M_r)\leq 2^{-2} \mu(A)$. The precise statement is given in Theorem \ref{thm:main}. It has consequences in the theories of Ricci curvature and optimal transportation through the fact that related Brunn-Minkowski inequalities as introduced in \cite{St2}, modeled on the Riemannian setting studied in \cite{CMS}, are contradicted by our construction (see Section \ref{sec:conseq}). In particular it implies that the curvature-dimension conditions $\mathsf{CD}(K,N)$ of Lott, Sturm and Villani \cite{LV1,St1, St2} can not be satisfied. This (negative) result was first proved for the Heisenberg group \cite{Ju_imrn} about 10 years ago as an extract of the author's doctoral thesis \cite{Ju_thesis}. The same principle of proof -- that is formally presented in \cite{Ju_kyoto} -- is used in the present paper together with the new lesson taken from Agrachev, Barilari and Rizzi's memoir \cite{ABR} that in any subRiemannian structure there exist normal minimizing curves that are ample at any time. This permits a construction similar to the one considered for the Heisenberg group to work again.

Before we can sate our theorem, let us recall some general definitions concerning metric spaces. A \emph{minimizing segment} of a metric space $(\X,\d)$ is a curve $\gamma:[t_0,t_1]\to (\X;\d)$ such that there exists $v\geq 0$ with $\d(\gamma(s),\gamma(t))=v|t-s|$ for every $s,\,t\in [t_0,t_1]$. A curve defined on an interval $I$ is a \emph{locally minimizing curve} if for $t\in I$ the restriction of $\gamma$ to some closed interval $[t-\eps,t+\eps]\cap I$ is a minimizing segment. A metric space is called \emph{geodesic} if for every $a,\,b\in \X$ there exists a minimizing segment from $a$ to $b$. In a geodesic space, a point $m$ is a \emph{mid point} of ratio $r$ (with $r\in [0,1]$) from $a$ to $b$ if there exists a geodesic segment $\gamma$ with $(\gamma(0),\gamma(r),\gamma(1))=(a,m,b)$. This notion extends to the \emph{mid set} of ratio $r$ from $A$ to $B$ that is the set of all mid points of ratio $r$ from some $a\in A$ to some $b\in B$.

\begin{them}\label{thm:main}
Let $M$ be a smooth, complete and connected subRiemannian structure of dimension $n\geq 3$ equipped with a  measure $\mu$ of smooth positive density, such that the varying rank of the distribution is smaller than $n-1$. We denote by $\mathcal{N}$ its minimal geodesic dimension. For every $\eps>0$, there exist Borel sets $A,\,B\subset M$ of finite non zero measure with $\mu(A)\in [\mu(B)(1-\eps),\mu(B)(1+\eps)]$ and a ratio parameter $r\in ]0,1[$ such that the mid set of ratio $r$ from $A$ to $B$ that we call $M_r$ satisfies $2^{\mathcal{N}-n}\mu(M_r)\leq \mu(B)(1+\eps)$. Moreover for every $R>0$ we can assume the diameter of $A\cup B$ to be smaller than $R$. 
\end{them}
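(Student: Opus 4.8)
The plan is to localize the construction near a single point and a single ample normal geodesic, and then transport the known Heisenberg-type computation to this general setting via the asymptotic volume contraction estimate governed by the geodesic dimension. The key input from \cite{ABR} is that at every point $x\in M$ and for every direction there is a normal minimizing geodesic $\gamma$ issuing from $x$ that is ample at $t=0$, and that along such a geodesic the measure contracts like the geodesic dimension $\mathcal N_\gamma$ dictates: if $\Phi_{x,t}$ denotes the geodesic homothety of ratio $t$ centered at $x$ along the family of geodesics filling a small ball, then $\mu(\Phi_{x,t}(B)) \sim t^{\mathcal N}\mu(B)$ as $t\to 0$ for $B$ a small ball around a point $y$ at which the geodesic from $x$ is ample, where $\mathcal N = \mathcal N_\gamma \geq n+1 > n$ because the structure is strictly subRiemannian (this strict inequality is exactly what makes $2^{\mathcal N - n} > 2$, hence the contradiction with the Riemannian Brunn--Minkowski/$\mathsf{CD}(K,N)$ inequalities discussed in Section \ref{sec:conseq}).

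**Main steps.**

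First I would fix a base point $a_0\in M$ and an ample normal geodesic $\gamma:[0,1]\to M$ with $\gamma(0)=a_0$, $\gamma(1)=b_0$, chosen short enough that $\gamma$ is the unique minimizer and lies in a coordinate chart; by \cite{ABR} I may take $\gamma$ ample at $t=0$, and after possibly reparametrizing (restricting to a subinterval) ample along a full neighborhood of $t=0$, with geodesic dimension at least the minimal geodesic dimension $\mathcal N$ of the structure. Second, I would choose $A$ to be a small ball $B(a_0,\rho)$ and then set the ratio $r$ close to $1$, so that $B$ is a correspondingly placed small ball near $b_0$ chosen so that $\mu(A)$ and $\mu(B)$ agree up to the factor $(1\pm\eps)$ — here one uses that $\mu$ has smooth positive density, so the $\mu$-measure of a small ball is comparable to its Lebesgue measure in the chart and one can tune radii. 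Third, and this is the heart, I would identify $M_r$, the mid set of ratio $r$, with (essentially) the image of $B$ under the geodesic homothety $\Phi_{a_0,1-r}$ centered near $a_0$ — more precisely $M_r$ is squeezed between such homothetic images as $a$ ranges over $A$ — and invoke the Agrachev--Barilari--Rizzi asymptotic $\mu(\Phi_{a_0,s}(B)) = (1+o(1))\, s^{\mathcal N}\mu(B)$ as $s\to 0$. Taking $s=1-r\to 0$ gives $\mu(M_r) \lesssim (1-r)^{\mathcal N}\mu(B)$ up to lower-order corrections coming from letting $a$ vary over the small ball $A$; comparing with what one needs, namely $2^{\mathcal N-n}\mu(M_r)\leq \mu(B)(1+\eps)$, one sees it suffices to take $1-r$ of order $1/2$ in the appropriate normalization and then absorb the error terms by shrinking $\rho$ and the length of $\gamma$. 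The final remark about $\mathrm{diam}(A\cup B)<R$ is automatic since the whole construction takes place in a ball of radius $O(\mathrm{length}(\gamma))$, which we are free to shrink.

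**Where the difficulty lies.**

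The routine parts are the measure-density comparisons and the choice of $r$ and radii. The delicate point is making the relation between the mid set $M_r$ and the geodesic homothety precise and quantitative: one must control the mid set not from a single base point but from the whole source set $A$, show that it is contained in a union of homothetic images that is itself close in measure to a single one (this needs an equicontinuity/uniformity statement for the exponential map and for amplitude along nearby geodesics, which \cite{ABR} provides since ampleness is an open condition and the geodesic dimension is upper semicontinuous), and verify that the asymptotic $t^{\mathcal N}$ contraction is uniform over the relevant family of geodesics so that the $o(1)$ is genuinely negligible. A secondary subtlety is ensuring that $\mathcal N$ appearing in the contraction is the \emph{minimal} geodesic dimension, not a larger pointwise value; this is handled by choosing $a_0$ and the direction of $\gamma$ so that the geodesic dimension along $\gamma$ realizes (or is dominated by a quantity arbitrarily close to) the infimum $\mathcal N$, which is legitimate because we only need the inequality $2^{\mathcal N-n}\mu(M_r)\leq\mu(B)(1+\eps)$ with the \emph{given} $\mathcal N$, and any geodesic dimension $\geq \mathcal N$ makes the contraction even stronger, hence the inequality easier.
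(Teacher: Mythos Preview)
Your proposal has a genuine gap at the ``heart'' step. If $A=B(a_0,\rho)$ and $B=B(b_0,\rho)$ are both small balls, the mid set $M_r$ does \emph{not} have measure of order $(1-r)^{\mathcal N}\mu(B)$; it has measure comparable to $\mu(B)$ itself. The reason is that $M_r=\bigcup_{a\in A}\M^r_{a,\cdot}(B)$, and while each individual slice $\M^r_{a,\cdot}(B)$ is small, as $a$ ranges over a full $n$-dimensional ball the slices sweep out a set of full size. Concretely, $M_r\supset \M^r_{a_0,\cdot}(B)$, and since $\Jac(\M^r_{a_0,\cdot})(b_0)\sim r^{\mathcal N}$, for $r$ close to $1$ this single slice already has measure close to $\mu(B)$, contradicting your claimed bound $\mu(M_r)\lesssim (1-r)^{\mathcal N}\mu(B)$. (In Euclidean space the computation is exact: two balls of radius $\rho$ give $M_r$ a ball of radius $\rho$, for every $r$.) Your sentence ``$M_r$ is squeezed between such homothetic images as $a$ ranges over $A$'' is where the argument fails: the union over $a\in A$ is not controlled by a single homothetic image.

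What the paper does instead is to \emph{engineer} $A$ so that this sweeping-out is suppressed. One takes $B=\mathcal{B}(b,\rho)$ but defines $A=\I^r_m(B)$ via the geodesic inverse map $\I^r_m$, chosen so that $\M^r(\I^r_m(y),y)=m$ for every $y$. Then the map $F(p,q)=\M^r(\I^r_m(p),q)$ parametrizing $M_r$ satisfies $F(p,p)\equiv m$, so its differential kills the diagonal and $M_r$ is contained, to first order, in $m+D(\M^r)_2(a,b)\cdot\mathcal{B}(0,2\rho)$, giving $\lambda_n(M_r)\leq 2^n r^{\mathcal N}(1+\eps)\lambda_n(B)$. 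The ratio $r$ is not sent to $0$ or $1$; it is fixed by the intermediate value theorem so that $\Jac\I^r_m(b)=1$ (which forces $\mu(A)\simeq\mu(B)$), and then, by swapping $a$ and $b$ if necessary, one arranges $r\leq 1/2$, whence $2^n r^{\mathcal N}\leq 2^{n-\mathcal N}$. None of these ingredients---the inverse map, the diagonal cancellation, or the IVT choice of $r$---appears in your sketch, and without them the construction does not produce a small mid set.
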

Two complementary remarks are in order:\begin{itemize}
\item The mid set $M_r$ may be non measurable so that $\mu(M_r)$ has to be understood as the outer measure of $M_r$.
\item The \emph{minimal geodesic dimension} $\mathcal{N}$ is introduced in Definition \ref{def:geodim}. In particular $\mathcal{N}\geq n+2$.
\end{itemize}

Let us formally state the consequence on the Brunn-Minkowski inequality that we already mentioned in this introduction:
\begin{cor}\label{coro:coro}
SubRiemannian structures (as in Theorem \ref{thm:main}) do not satisfy any Brunn-Minkowski inequality $\mathsf{BM}(K,N)$, as well as any Curvature-Dimension condition $\mathsf{CD}(K,N)$ for $K\in \R$ and $N\geq 1$. 
\end{cor}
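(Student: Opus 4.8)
The plan is to argue by contradiction, assuming that $(M,\d,\mu)$ satisfies $\mathsf{BM}(K,N)$ for some $K\in\R$ and some finite $N\geq 1$, and to confront this with Theorem \ref{thm:main}. Recall that $\mathsf{BM}(K,N)$ requires, for all Borel sets $A,B$ of positive measure and every $t\in[0,1]$, the lower bound
$$\mu(M_t)^{1/N}\geq \tau_{K,N}^{(1-t)}(\Theta)\,\mu(A)^{1/N}+\tau_{K,N}^{(t)}(\Theta)\,\mu(B)^{1/N},$$
where $M_t$ is the mid set of ratio $t$ from $A$ to $B$, $\Theta$ is the relevant (inf for $K\geq 0$, sup for $K<0$) distance between $A$ and $B$, so that $\Theta\leq\mathrm{diam}(A\cup B)$, and $\tau_{K,N}^{(\cdot)}$ are Sturm's distortion coefficients. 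The only feature of these coefficients that I need is their behaviour near $\Theta=0$: being built from the smooth functions $\sin$ and $\sinh$, they satisfy $\tau_{K,N}^{(t)}(\Theta)\to t$ as $\Theta\to 0$, uniformly in $t\in[0,1]$, since $\frac{\sin(ts)}{\sin s}\to t$ and $\frac{\sinh(ts)}{\sinh s}\to t$ uniformly for $t\in[0,1]$ as $s\to 0$. Hence for every $\eta>0$ there is an $R_0>0$ with $\tau_{K,N}^{(t)}(\Theta)\geq t(1-\eta)$ whenever $\Theta\leq R_0$ and $t\in[0,1]$.

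Next I would invoke Theorem \ref{thm:main}. Fixing $\eta>0$ small and the radius $R:=R_0$ above, and choosing $\eps>0$ small, the theorem produces Borel sets $A,B$ of finite nonzero measure with $\mu(A)\in[\mu(B)(1-\eps),\mu(B)(1+\eps)]$, a ratio $r\in\,]0,1[$, diameter of $A\cup B$ below $R$, and a mid set $M_r$ obeying $2^{\N-n}\mu(M_r)\leq \mu(B)(1+\eps)$. Since $\N\geq n+2$, the factor $2^{n-\N}\leq 2^{-2}=\tfrac14$, so the bound reads $\mu(M_r)\leq \tfrac14\mu(B)(1+\eps)$. In particular $\Theta\leq R=R_0$, so the distortion estimate of the previous step applies to the very sets the theorem produces, even though the ratio $r$ they come with is not under my control.

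Now I would simply compare the two bounds on $\mu(M_r)$. On one hand, $\mathsf{BM}(K,N)$ together with $\mu(A)\geq \mu(B)(1-\eps)$ and $\tau_{K,N}^{(t)}(\Theta)\geq t(1-\eta)$ yields
$$\mu(M_r)^{1/N}\geq (1-\eta)\big[(1-r)(1-\eps)^{1/N}+r\big]\mu(B)^{1/N}\geq (1-\eta)(1-\eps)^{1/N}\mu(B)^{1/N},$$
using $(1-r)x+r\geq x$ for $x=(1-\eps)^{1/N}\leq 1$. On the other hand the theorem gives $\mu(M_r)^{1/N}\leq\big(2^{-2}(1+\eps)\big)^{1/N}\mu(B)^{1/N}$. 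Since $2^{-2/N}<1$ for every finite $N$, choosing $\eta$ and $\eps$ small enough makes $(1-\eta)(1-\eps)^{1/N}>\big(2^{-2}(1+\eps)\big)^{1/N}$, which is incompatible with $\mu(B)>0$. This contradiction shows that $\mathsf{BM}(K,N)$ cannot hold. The claim for the curvature-dimension condition then follows at once, since $\mathsf{CD}(K,N)$ implies $\mathsf{BM}(K,N)$ by Sturm \cite{St2}; hence $\mathsf{CD}(K,N)$ fails as well.

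The step I expect to be the most delicate is the \emph{uniform} control $\tau_{K,N}^{(t)}(\Theta)\geq t(1-\eta)$ as $\Theta\to 0$, precisely because the ratio $r$ handed to me by Theorem \ref{thm:main} may itself drift with $\eps$; a pointwise limit at a fixed $r$ would not suffice, whereas uniformity in $t$ lets me absorb this ignorance. Everything else is an elementary comparison of the two explicit bounds, once the decisive gap $2^{-2/N}<1$ coming from $\N\geq n+2$ is exploited.
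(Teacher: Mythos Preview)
Your argument is correct and follows essentially the same approach as the paper: use that $\tau^{K,N}_t(\Theta)\to t$ as $\Theta\to 0$ to see that $\mathsf{BM}(K,N)$ would force $\mu(M_r)\gtrsim\mu(B)$, then contradict this with the upper bound $\mu(M_r)\leq 2^{n-\mathcal N}(1+\eps)\mu(B)\leq \tfrac14(1+\eps)\mu(B)$ from Theorem~\ref{thm:main} by shrinking $R$. Your explicit insistence on the \emph{uniformity} in $t$ of the limit $\tau^{K,N}_t(\Theta)\to t$ is a point the paper leaves implicit, and it is indeed needed since the ratio $r$ produced by Theorem~\ref{thm:main} is not fixed in advance.
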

The definition of $\mathsf{BM}(K,N)$ is recalled in Section \ref{sec:conseq}.

The structure of the paper is as follows. In the first part we gather useful statements on special curves called normal geodesics. When restricted they can become minimizing segments. We introduce the growth vector and the geodesic dimension associated to a normal geodesic that is ample at some time $t$. We also recall the notion of smooth (pair of) points. In the second part, we prove Theorem \ref{thm:main}. The proof is based on Theorem 1 in \cite{Ju_kyoto}, the properties of ample normal geodesics and a few new ideas. As in \cite{Ju_kyoto} we need two special maps: the \emph{mid point map} of ratio $r$ and the \emph{geodesic inverse map} of ratio $r$ and choose $r$ so that the Jacobian determinant of the second map is $\pm1$. Important in the construction is that we can reverse the orientation so that we can assume $r\leq1/2$. In the last part we explain some relations with the contemporary literature on Ricci curvature for subRiemannian structures and metric spaces.

\paragraph{Aknowledgements} I would like to thank the organizers of the thematic trimester \emph{Geometry, Analysis and Dynamics on Sub-Riemannian Manifolds} held  in Paris at IHP in 2014. The result of this paper was announced during the thematic day on \emph{Optimal transport and sub-Riemannian manifolds}, on October 15th. Before, I benefited of instructive discussions with Davide Barilari, Ludovic Rifford and Luca Rizzi. I also wish to thank Luca Rizzi for encouraging me to write this note. I thank Samuel Borza for carefully reading the preprint. Finally I thank the referee for very complete reports that permitted me to improve many aspects of the paper. Resulting from the editorial process it is worth noting that the proof presented here does not encompass the case of almost-Riemannian structures (for which the rank of the distribution equals the dimension on a small set). The position of $m$ (see p.\ \pageref{egal_un}) can namely not be prescribed but only the neighborhood of the curve where it lies. However, the case of the Gru\v sin plane is addressed and solved in \cite{Ju_kyoto,Ju_thesis}.

\section{Ampleness and smoothness}

The characterization of minimizing segments (or more commonly minimizing curves) in a subRiemannian structure is a complex problem with a long history (see \cite{Mt}). We will only focus on the \emph{normal geodesics} that are the most intuitive minimizing curves in the sense that these are the only ones occurring in Riemannian geometry. Take care however that some of the so-called \emph{strictly abnormal geodesics} may also be restricted to minimizing segments. As the latter are less understood we only use normal geodesics while implementing the construction in \cite{Ju_imrn, Ju_kyoto} to more general subRiemannian structures. The \emph{subRiemannian Hamiltonian} is a function $H$ defined in local coordinates on $T^*M$ by the formula $H(x,p)=\frac12\sum_{i=1}^{\mathrm{rank}(\Delta_x)} p(X_i)^2$ where $(X_i)_i$ is an orthonormal basis of $(\Delta_x,g)$. It generates an Hamitonian flow defined as usual through the Hamiltonian gradient $\vec{H}$ by the equation $\dot\Psi=\vec{H}\circ\Psi$, $\Psi(0)=(x,p)$ where $(x,p)\in T^*M$. For every $x\in M$ we introduce the subRiemannian exponential map $\mathcal{E}_x:p\in T^*_x M\to \mathcal{E}_x(p)=\pi[\Psi(t)]\in M$ where $\pi:T^*M\to M$ is the canonical projection to the basis and $\Psi$ is defined with initial conditions $(x,p)$. For any $(x,p)\in T^*M$, the curve $t\mapsto \mathcal{E}_{x}(t p)$ is called a \emph{normal geodesic} and $\Psi$ a normal extremal. Therefore a normal geodesic is a curve in $M$ that admits a normal extremal lift. Here is the relation between normal geodesics and minimizing segments, and hence with our problem.

\begin{lem}[{Theorem 1.14 in \cite{Mt}}]\label{lem:shorten}
For every $(x,p)\in T^*M$ and $t_0\in \R$, there exists $\eps>0$ such that $t\mapsto \mathcal{E}_x(tp)$ is a minimizing segment on $[t_0-\eps,t_0+\eps]$ and the unique one between its ends. Thus, normal geodesics are locally minimizing curves.
\end{lem}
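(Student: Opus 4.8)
The plan is to prove local minimality by exhibiting a \emph{calibration} along the geodesic --- a smooth local solution of the Hamilton--Jacobi equation $H(\cdot,dS)=\mathrm{const}$ --- and to measure every competing horizontal curve against it; the equality case of the comparison will yield the uniqueness clause. First I would reduce to the nontrivial case. Since $H$ is constant along the normal extremal $\Psi$ issued from $(x,p)$, set $c:=H(x,p)$. If $c=0$ then $\pi_*\vec H\equiv 0$ along $\Psi$, so $t\mapsto\mathcal{E}_x(tp)$ is the constant curve $x$, which is trivially the unique minimizing segment from $x$ to itself. So assume $c>0$; then the geodesic has constant speed $\sqrt{2c}$, because $\|\pi_*\vec H\|^2=2H$, and I write $x_0:=\mathcal{E}_x(t_0p)$.

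Next I would build a field of normal extremals around the arc of $\Psi$ over a fixed short interval $[t_0-\eps_0,t_0+2\eps_0]$. Choose an $(n-1)$-dimensional isotropic disc $\Lambda_0\subset\{H=c\}\subset T^*M$ through $\Psi(t_0-\eps_0)$, transverse to $\vec H$, and whose projection to $M$ is a hypersurface transverse to the velocity of the geodesic there. Flowing $\Lambda_0$ by the Hamiltonian flow produces an $n$-dimensional Lagrangian submanifold $\Lambda\subset\{H=c\}$ containing this arc of $\Psi$. Since $\pi_*\vec H\neq 0$ along the arc and $\Lambda_0$ was chosen transverse to it, the projection $\pi|_\Lambda$ is a local diffeomorphism along the arc; after shrinking $\Lambda_0$ and $\eps_0$, $\pi|_\Lambda$ is a diffeomorphism onto a simply connected open set $U\ni x_0$.

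Because $\Lambda$ is Lagrangian and $\pi|_\Lambda$ is a diffeomorphism, the covector field $y\mapsto p(y)$ on $U$ determined by $(y,p(y))\in\Lambda$ is closed, hence equals $dS$ for some $S\in\mathcal{C}^\infty(U)$, and $H(y,dS_y)=c$ on $U$. For any horizontal curve $\sigma:[a,b]\to U$, the Cauchy--Schwarz inequality in the fibre against the definition of $H$ gives $S(\sigma(b))-S(\sigma(a))=\int_a^b\langle dS_{\sigma},\dot\sigma\rangle\le\sqrt{2c}\,\mathrm{length}(\sigma)$, with equality if and only if $\dot\sigma$ is, almost everywhere, the nonnegative multiple of the gradient direction prescribed by $dS$, i.e.\ if and only if $\sigma$ is a monotone reparametrization of an integral curve of the vector field $y\mapsto\sum_i dS_y(X_i)X_i$ on $U$ --- and these are exactly the projected Hamiltonian trajectories, among them $t\mapsto\mathcal{E}_x(tp)$. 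Applying this to the restriction $\gamma$ of $t\mapsto\mathcal{E}_x(tp)$ to $[t_0-\eps,t_0+\eps]$ (which attains equality, with $S$ increasing at rate $2c$) and to any horizontal curve $\sigma$ with the same endpoints that stays in $U$, one gets $\mathrm{length}(\sigma)\ge\mathrm{length}(\gamma)$ with equality only for $\sigma$ a reparametrization of $\gamma$; for $\eps$ small a competitor leaving $U$ is excluded by a crude length lower bound, since $\mathrm{length}(\gamma)\to0$ while $U$ is fixed. Thus $\gamma$ minimizes length between its endpoints at constant speed, hence is a minimizing segment on $[t_0-\eps,t_0+\eps]$ and the unique one between its ends; as $t_0$ was arbitrary, normal geodesics are locally minimizing.

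The one substantial step is the construction of the Lagrangian flow-out $\Lambda$ together with the proof that $\pi|_\Lambda$ embeds it onto a neighbourhood of $x_0$ --- equivalently, that arbitrarily short sub-arcs of a normal extremal carry no conjugate point, plus the transversality bookkeeping at $\Lambda_0$. Everything afterwards (the Hamilton--Jacobi inequality and its equality case) is routine and delivers minimality and uniqueness simultaneously.
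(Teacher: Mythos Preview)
The paper does not prove this lemma at all: it is quoted as Theorem~1.14 of Montgomery's monograph \cite{Mt} and used as a black box. Your proposal supplies an argument where the paper gives none, and the argument you give --- build a Lagrangian flow-out of an isotropic transversal inside $\{H=c\}$, project it to get a local solution $S$ of the Hamilton--Jacobi equation $H(\cdot,dS)=c$, and use $S$ as a calibration via Cauchy--Schwarz on horizontal curves --- is exactly the classical proof that appears in Montgomery (and in Liu--Sussmann, Agrachev--Sachkov, Rifford, etc.). So there is nothing to compare against in the paper itself, and your route is the standard one underlying the cited reference.

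The argument is correct. Two small points worth tightening if you write it out in full: (i) the existence of the isotropic disc $\Lambda_0\subset\{H=c\}$ with $\pi|_{\Lambda_0}$ an embedding onto a hypersurface transverse to the geodesic velocity deserves one explicit sentence (e.g.\ pick a hypersurface $\Sigma\ni\gamma(t_0-\eps_0)$ transverse to $\dot\gamma$, lift it by a closed $1$-form matching $p_0|_{T\Sigma}$, and use the implicit function theorem in the conormal direction to hit $\{H=c\}$; the nondegeneracy is exactly $\dot\gamma\notin T\Sigma$); (ii) the ``crude length lower bound'' excluding competitors that leave $U$ is best phrased via a fixed sub-Riemannian ball $B_\d(x_0,\delta)\subset U$, so that for $\eps$ small the endpoints lie in $B_\d(x_0,\delta/3)$ and any curve exiting $U$ has length $\geq 2\delta/3>2\eps\sqrt{2c}$. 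You already flag that the only substantive step is that $\pi|_\Lambda$ is an embedding near $x_0$ (no conjugate point on arbitrarily short arcs), and that is indeed where the content lies.
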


The notion of normal curve can be reinforced in terms of strictly or strongly normal curves \cite[Definition 2.14]{ABR}, as well as ample curves. Precisely, a curve is strictly normal on $[s,t]$ if it does not admit an abnormal extremal lift on this interval. It is strongly normal on $[s,t]$ if is strictly normal on any interval $[s,t']\subset [s,t]$. Since we do not define abnormal curves in this note we rely on the alternative equivalent definitions provided by Proposition 3.6 in \cite{ABR}. To state them in Definition \ref{def:special} we need to introduce some more notation. Let $\gamma:[t_0,t_1]\to M$ be an absolutely continuous curve. In a neighborhood $U$ of its support we consider a family of vector fields $(X_i)$ such that $(X_i(x))_{i}$ spans $\Delta_x$ at every $x\in U$. We introduce now controls $u_i:[t_0,t_1]\to \R$ such that $\dot{\gamma}(t)=\sum u_i(t)X_i(\gamma(t))$ at almost every time $t$. Let $P_{s,t}$ be the diffeomorphism associated with the flow on $[s,t]$ of this controlled ODE for variable initial positions in a neighborhood of $x$, i.e $P_{s,s}(x)=x$ and $\dd P_{s,t}(x)/\dd t= u_i(t)X_i(P_{s,t}(x))$. Finally we set $\mathscr{F}_{\gamma,s}(t)=(P_{s,t})_*^{-1}\Delta_{\gamma(t)}$. We also introduce derivatives of $t\mapsto \mathscr{F}_{\gamma,s}(t)$ as follows (see also \S3.4 in \cite{ABR}):
\[\forall i\geq 1,\quad \mathscr{F}_{\gamma,s}^{i}(t)=\left\{\frac{d^j}{dt^j} v(t):\, v(t)\in \mathscr{F}_{\gamma,s}(t)\text{ smooth, }j\leq i-1\right\}\subset T_{\gamma(s)}M.\]

\begin{defi}[Special normal geodesics]\label{def:special}
Let $\gamma$ be a normal geodesic. It is said \emph{strictly normal} on $[s,t]$ if $\mathrm{span}\{\mathscr{F}_{\gamma,s}(s'):\,s'\in [s,t]\}=T_{\gamma(s)}M$. It is \emph{strongly normal} on $[s,t]$ if it is strictly normal on each $[s,t']\subset [s,t]$. The curve is said \emph{ample} at time $s$ if the family of derivatives of $t\mapsto \mathscr{F}_{\gamma,s}(t)$ at time $s$ spans the whole $T_{\gamma(s)}M$.
\end{defi}


According to Proposition 3.6 of \cite{ABR} any ample curve at time $s$ is strongly normal on $[s,t]$. One of the most important results of \cite{ABR} is the following:
\begin{pro}[{Theorem 5.17 in \cite{ABR}}]\label{pro:exists}
For every $x\in M$, there exists $T>0$ and a normal geodesic $t\mapsto \mathcal{E}_x(tp)$ defined on $[0,T]$ that is ample at every time.
\end{pro}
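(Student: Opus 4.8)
The plan is to produce a covector $p\in T^*_xM$ for which the normal geodesic $t\mapsto\mathcal E_x(tp)$ is ample already at $t=0$, and then to obtain ampleness at all later times for free by an openness argument. Recall first that ampleness is at least as strong as strong normality: by the relation quoted after Definition~\ref{def:special}, ampleness at a time $s$ forces the curve to be strongly normal on $[s,\cdot]$, so a geodesic that is not strongly normal on a neighbourhood of $s$ cannot be ample at $s$. On the other hand, strongly normal geodesics emanating from $x$ do exist: this is a classical consequence of Hörmander's condition together with the structure of the end-point map --- the latter is a smooth map whose image contains a neighbourhood of $x$ (Chow--Rashevskii), abnormal minimizers are exceptional, and for a dense set of initial covectors $p$ the curve $t\mapsto\mathcal E_x(tp)$ is strongly normal on a small interval $[0,T_0]$ (see the discussion in \cite[\S3]{ABR}).

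Next I would check that, along any normal geodesic $\gamma$, the set of times at which $\gamma$ is ample is open. Ampleness at $t_0$ with step $m_0$ says that the nested subspaces $\mathscr F^{1}_{\gamma,t_0}(t_0)\subset\cdots\subset\mathscr F^{m_0}_{\gamma,t_0}(t_0)$ already fill $T_{\gamma(t_0)}M$; each of them is the span of finitely many vectors obtained by differentiating, at the base time, a smooth frame of the Grassmann-valued curve $\tau\mapsto\mathscr F_{\gamma,t}(\tau)$ built from the flows $P_{s,t}$, and these vectors depend smoothly on the base time $t$ because the Hamiltonian flow and the flows $P_{s,t}$ do. Since the dimension of the span of smoothly varying vectors is lower semicontinuous and it attains the maximal value $n$ at $t_0$, it equals $n$ for every $t$ in a one-sided neighbourhood of $t_0$; hence $\gamma$ is ample (with step at most $m_0$) near $t_0$. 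Consequently, if the geodesic of the previous paragraph can be chosen ample at $t=0$, then it is ample on an interval $[0,\varepsilon)$, and it suffices to set $T:=\varepsilon/2$ and restrict to $[0,T]$, the base point remaining $x=\mathcal E_x(0)$.

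The heart of the matter is therefore to arrange ampleness at $t=0$, i.e.\ to choose $p$ so that the flag of $t\mapsto\mathcal E_x(tp)$ at the origin is not infinitesimally confined to a proper subspace of $T_xM$. This is exactly where Hörmander's bracket-generating condition must be used in an essential way, and where the analysis of \cite{ABR} enters: heuristically, if every normal geodesic issuing from $x$ failed to be ample at $0$, then the velocities and all higher-order jets of this whole family of geodesics would be trapped, at $x$, inside proper subspaces, contradicting the fact that $\Delta$ is bracket-generating; making this rigorous means working directly with the controlled ODEs defining $\mathscr F_{\gamma,0}$, since in the merely $C^\infty$ category there is no analytic unique continuation to fall back on. I expect this last point --- which is the genuine content of Theorem~5.17 of \cite{ABR} --- to be the main obstacle, the other ingredients (existence of strongly normal geodesics, openness of the set of ample times, restriction to $[0,T]$) being comparatively routine.
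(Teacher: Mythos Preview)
The paper does not prove this proposition: it is stated verbatim as Theorem~5.17 of \cite{ABR} and no argument is supplied. There is therefore no ``paper's own proof'' against which to compare your proposal.

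Your proposal, in turn, is not a proof either but an outline that explicitly defers the decisive step --- the existence of a covector $p\in T_x^*M$ for which $t\mapsto\mathcal E_x(tp)$ is ample at $t=0$ --- back to \cite{ABR}. The auxiliary ingredients you sketch (openness of the set of ample times along a fixed normal geodesic, followed by restriction to a short interval $[0,T]$) are plausible and indeed appear in \cite{ABR}; but since you yourself identify the core existence statement as ``the genuine content of Theorem~5.17 of \cite{ABR}'' and do not attempt to establish it, your proposal and the paper end up in the same position: both invoke the result rather than prove it. If you intend this as a self-contained argument, the gap is precisely the step you flag --- showing that the bracket-generating condition forces some initial covector to give an ample flag at the origin --- and that step is not routine.
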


We now define  the geodesic dimension attached to the subRiemannian exponential map for a covector $p\in T_x^*M$. For this we introduce the \emph{growth vector} $\mathcal{G}_p=\{k_1,\ldots, k_m\}$ where $k_i=\dim( \mathscr{F}_{\gamma,0}^{i}(0))$ and $m=\min\{i\in \mathbb{N}:\,k_{i}=n\}$ are given for the normal curve $\gamma: t\in\R\mapsto \E_x(tp)$. In particular $\gamma$ is ample at $0$ if and only if $m$ is finite. Notice also the relation $k_1=\mathrm{dim}(\Delta_x)$.
\begin{defi}[Geodesic dimension]\label{def:geodim}
Let $x$ be a point of $M$. For every covector $p$ in $T^*_x M$ we define $\mathcal{N}_{x,p}$ as $\mathcal{N}_{x,p}=\sum_{i=1}^m (2i-1)(k_i-k_{i-1})$ if $t\mapsto \mathcal{E}_x(tp)$ is ample at time $0$ and $+\infty$ if it is not. The \emph{geodesic dimension} at $x\in M$ is $\mathcal{N}_x=\min\{\mathcal{N}_{x,p}:\,p\in T^*_x M\}<\infty$ (the finiteness comes from Proposition \ref{pro:exists}). Finally, we denote by $\mathcal{N}=\min_{x\in M} \mathcal{N}_x$ the \emph{minimal geodesic dimension} on $M$.
\end{defi}

As explained in Proposition 5.49 of \cite{ABR}, for every $x\in M$, the geodesic dimension at $x$ is greater than the homogeneous dimension (or Hausdorff dimension) of the tangent Carnot group. This one is computed with Mitchell's formula from the dimensions of the spaces $\Delta_x,\, (\Delta+[\Delta,\Delta])_x,$\ldots  and hence it is larger than $n+(n-r)=r+2(n-r)$ where $r$ is the maximal rank of $\Delta$. Using the same principle with the formula defining $\mathcal{N}_{x,p}$ we obtain
\[\mathcal{N}_{x,p}= 1 k_1+3k_2+5k_3+\ldots +(2m-1)k_m\geq r+3(n-r).\]
As $M$ is a strictly subRiemannian structure it holds $r<n$. Thus $\mathcal{N}_{x,p}\geq n+2$.

\begin{defi}[Pairs of smooth points]\label{def:smooth}
Let $x$ and $y$ be points of $M$. The pair $(x,y)$ is a \emph{pair of smooth points}  or a \emph{smooth pair} if $\d^2$ is $\mathcal{C}^2$ at $(x,y)$. We denote by $\Sigma$ the set of smooth pairs and $\Sigma_x$ the set of the points $y\in M$ such that $(x,y)\in \Sigma$.
\end{defi}

Note that $\Sigma$ and $\Sigma_x$ are open. Moreover since $M$ is strictly subRiemannian the differentiability of $\mathsf{d}^2$ in Definition \ref{def:smooth} can be replaced by the one of $\mathsf{d}$, even at $x=y$. The following two results are taken from \cite{Agr,RT05}\footnote{In \cite{Agr} smooth points were in fact introduced through the (equivalent) statement on $\mathcal{E}_x$ appearing in Proposition \ref{pro:charac}.}.
\begin{pro}
The pair $(x,y)$ is a pair of smooth points if and only if $z\mapsto \d(x,z)^2$ is smooth at $y$, if and only if $z\mapsto \d(z,y)^2$ is smooth at $x$, if and only if $\d^2$ is smooth at $(x,y)$.
\end{pro}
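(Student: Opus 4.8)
The plan is to reduce this fourfold equivalence to a single non-trivial implication and to obtain that one from the structural analysis of the subRiemannian squared distance near smooth points. First I would dispose of the diagonal: when $x=y$ none of the four properties holds in the strictly subRiemannian setting (cf.\ the remark after Definition \ref{def:smooth}), so the equivalence is vacuous there; from now on $x\neq y$. Next I would record the three trivial implications: joint $\mathcal{C}^\infty$ smoothness of $\d^2$ at $(x,y)$ restricts both to joint $\mathcal{C}^2$ smoothness (i.e.\ $(x,y)\in\Sigma$) and to $\mathcal{C}^\infty$ smoothness of each one-variable function $z\mapsto\d(x,z)^2$ and $z\mapsto\d(z,y)^2$; and each of these one-variable properties, and also $(x,y)\in\Sigma$ (restrict $\d^2$ to a vertical slice), implies that $f:=\d(x,\cdot)^2$ is twice differentiable at $y$. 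Since $\d$ is symmetric, the statement involving $z\mapsto\d(z,y)^2$ is the one involving $z\mapsto\d(x,z)^2$ with the roles of $x$ and $y$ exchanged. So the whole proposition follows once I prove: \emph{if $f=\d(x,\cdot)^2$ is twice differentiable at $y$, then $\d^2$ is $\mathcal{C}^\infty$ on a neighbourhood of $(x,y)$}.

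For this implication the substantive input --- precisely what is borrowed from \cite{Agr,RT05} --- is the following structural dichotomy. Differentiability of $f$ at $y$ already forces the minimizing geodesic from $x$ to $y$ to be unique and to be a normal geodesic $\gamma\colon t\in[0,1]\mapsto\mathcal{E}_x(tp)$, whose terminal covector is recovered from (half) the differential $\dd_y f$ via the inverse metric, and moreover $\gamma$ is strongly normal. Then the existence of the Hessian of $f$ at $y$ rules out $y$ being conjugate to $x$ along $\gamma$: at a conjugate point $\mathcal{E}_x$ fails to be a local diffeomorphism at $p$ (its differential drops rank), and this is an obstruction to $f$ being $\mathcal{C}^2$ at $y$, since --- up to the inverse metric and the non-degenerate derivatives of the flow $P_{0,t}$ --- the Hessian of $f$ at $y$ encodes $(\dd_p\mathcal{E}_x)^{-1}$. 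I would therefore conclude that $\dd_p\mathcal{E}_x\colon T^*_xM\to T_yM$ is invertible. Since $\mathcal{E}_x$ is itself $\mathcal{C}^\infty$ --- being $\pi$ composed with the time-one map of the Hamiltonian flow of the smooth Hamiltonian $H$ --- the inverse function theorem then makes $\mathcal{E}_x$ a diffeomorphism from a neighbourhood of $p$ onto a neighbourhood of $y$ (this is, in the $\mathcal{E}_x$-formulation, essentially the characterization recorded in Proposition \ref{pro:charac}).

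From there the argument is just composition of smooth maps. For $z$ near $y$ the unique minimizing geodesic from $x$ to $z$ is $t\mapsto\mathcal{E}_x\big(t\,\mathcal{E}_x^{-1}(z)\big)$, so its squared length gives $\d(x,z)^2=2H\big(x,\mathcal{E}_x^{-1}(z)\big)$, which is $\mathcal{C}^\infty$ in $z$. To upgrade to joint smoothness I would re-run this with the base point as a parameter: $(x',p')\mapsto\mathcal{E}_{x'}(p')$ is $\mathcal{C}^\infty$ on $T^*M$ (a time-one Hamiltonian flow depends smoothly on all initial data), its partial differential in $p'$ at $(x,p)$ is the invertible operator $\dd_p\mathcal{E}_x$ just obtained, so the implicit function theorem yields a $\mathcal{C}^\infty$ map $(x',z')\mapsto p(x',z')$ near $(x,y)$ with $\mathcal{E}_{x'}\big(p(x',z')\big)=z'$, and then $\d^2(x',z')=2H\big(x',p(x',z')\big)$ is $\mathcal{C}^\infty$ near $(x,y)$. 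Combined with the three trivial implications, this closes the cycle and gives all four equivalences.

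The main obstacle is entirely contained in the second paragraph: extracting uniqueness, normality, and --- crucially --- the absence of conjugate points along the connecting geodesic from only first- and second-order information on the squared distance. This is the delicate content of \cite{Agr,RT05}; everything downstream is a mechanical application of the inverse and implicit function theorems to the smooth Hamiltonian flow.
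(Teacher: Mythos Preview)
The paper does not prove this proposition at all: it is simply quoted from \cite{Agr,RT05} (``The following two results are taken from \cite{Agr,RT05}''), with no argument supplied. So there is no paper's proof to compare your proposal against.

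That said, your reconstruction is a sensible outline of how such a proof goes, and it correctly isolates the one non-trivial step: passing from low-order regularity of $f=\d(x,\cdot)^2$ at $y$ to the exponential-map characterization (uniqueness of the minimizer, normality, and non-conjugacy), after which joint $\mathcal{C}^\infty$ smoothness of $\d^2$ follows by the inverse/implicit function theorem applied to the smooth Hamiltonian flow. You also correctly note that this is exactly the content of Proposition~\ref{pro:charac} (and the paper's footnote confirms that in \cite{Agr} smooth points are in fact \emph{defined} through the $\mathcal{E}_x$-characterization).

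One point to tighten: you reduce everything to the hypothesis ``$f$ is \emph{twice differentiable at} $y$'' in the pointwise sense. The arguments in \cite{Agr,RT05} that rule out conjugacy are typically phrased with $\mathcal{C}^2$ (or at least $\mathcal{C}^1$) regularity on a neighbourhood, not merely a pointwise Hessian; your heuristic that ``the Hessian of $f$ at $y$ encodes $(\dd_p\mathcal{E}_x)^{-1}$'' is morally right but needs the surrounding $\mathcal{C}^1$ structure to be made rigorous. Since each of the four hypotheses you start from already gives $f\in\mathcal{C}^2$ near $y$ (not just at $y$), you lose nothing by taking that as your working assumption, and the argument then goes through cleanly.
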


\begin{pro}\label{pro:charac} 
Let $x,\,y$ be points of $M$. Then $y\in \Sigma_x$ if and only if there exists $p\in T^*_x M$ such that $t\mapsto \mathcal{E}_x(tp)$ is the unique minimizing segment from $x$ to $y$ defined on $[0,1]$ and $\mathcal{E}_x$ is a submersion at $p$.
\end{pro}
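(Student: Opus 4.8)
The plan is to lean on the preceding Proposition to replace the condition $y\in\Sigma_x$ by the smoothness of $f:=\tfrac12\d(x,\cdot)^2$ on a neighbourhood of $y$, and then to argue through the Hamilton--Jacobi picture for $f$. Throughout, write $\Phi^t$ for the flow on $T^*M$ of the Hamiltonian vector field $\vec H$, so that $\mathcal{E}_x(tp)=\pi\bigl(\Phi^t(x,p)\bigr)$ by homogeneity of $H$, the momentum norm $|p|^2=2H$ is conserved along $\Phi^t$, and the projected curve $t\mapsto\mathcal{E}_x(tp)$ has speed $|p|$, hence length $|p|$ over $[0,1]$.

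For the implication $y\in\Sigma_x\Rightarrow\exists p$, I would choose an open $V\ni y$ on which $f$ is smooth; such a $V$ exists since $\Sigma_x$ is open. The analytic heart, which is the content imported from \cite{Agr,RT05}, is the following: whenever $f$ is differentiable at a point $z\neq x$, one has the eikonal identity $|\dd_z f|=\d(x,z)$, the point $\Phi^{-1}(z,\dd_z f)$ has base point $x$, and $t\mapsto\pi\bigl(\Phi^{t-1}(z,\dd_z f)\bigr)$, $t\in[0,1]$, is the \emph{unique} minimizing segment from $x$ to $z$, which is moreover a normal geodesic. Its proof goes through the local semiconcavity of $f$ away from $x$ together with the eikonal identity, which force the terminal covector of every minimizing segment from $x$ to $z$ to be $\dd_z f$, thereby excluding strictly abnormal competitors and, since the backward Hamiltonian flow from $(z,\dd_z f)$ is determined, pinning the geodesic down. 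Granting this, I would set $p(z):=\pi_{T^*M}\bigl(\Phi^{-1}(z,\dd_z f)\bigr)\in T^*_x M$ for $z\in V$, so that $\mathcal{E}_x(p(z))=z$ and $t\mapsto\mathcal{E}_x(tp(z))$ is the unique minimizing segment from $x$ to $z$; taking $p:=p(y)$ gives the first part of the statement. For the submersion claim, observe that $z\mapsto p(z)$ is smooth on $V$ (a composition of the smooth section $z\mapsto(z,\dd_z f)$ with $\Phi^{-1}$ and the cotangent projection) and that $\mathcal{E}_x\circ p=\mathrm{id}_V$; differentiating at $y$ gives $\dd_p\mathcal{E}_x\circ\dd_y p=\mathrm{id}_{T_y M}$, so $\dd_p\mathcal{E}_x$ is onto, hence an isomorphism since $\dim T^*_x M=n=\dim M$, i.e. $\mathcal{E}_x$ is a submersion at $p$.

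For the reverse implication I would assume $p$ is as in the statement. Since $\dd_p\mathcal{E}_x$ is an isomorphism, $\mathcal{E}_x$ restricts to a diffeomorphism of a neighbourhood of $p$ onto a neighbourhood $W$ of $y$; let $q\colon W\to T^*_x M$ be the inverse, $q(y)=p$, and set $\gamma_z(t):=\mathcal{E}_x(tq(z))$, a normal geodesic from $x$ to $z$ of length $|q(z)|$, smooth in $(t,z)$. The family $(\gamma_z)_{z\in W}$ is a central field of normal extremals about $\gamma_y([0,1])$, so the classical Weierstrass field argument -- applicable because the sub-Riemannian energy has a strictly convex integrand in the velocities, exactly as in the Riemannian case -- shows each $\gamma_z$ minimizes length among curves from $x$ to $z$ that stay close to $\gamma_y$. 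To upgrade this to genuine minimality I would invoke uniqueness of the minimizing segment from $x$ to $y$: by Arzel\`{a}--Ascoli, for $z$ close enough to $y$ every minimizing segment from $x$ to $z$ is uniformly close to $\gamma_y$, hence remains inside the field region, so $\gamma_z$ is itself minimizing and $\d(x,z)=|q(z)|$. Consequently $\tfrac12\d(x,\cdot)^2=\tfrac12|q(\cdot)|^2$ is smooth near $y$, so $\d^2$ is smooth at $(x,y)$, i.e. $y\in\Sigma_x$.

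The main obstacle, symmetric in both directions, is the possible interference of strictly abnormal minimizers, together with the analytic work behind the imported fact -- turning the infinitesimal (field-theoretic) minimality into genuine global minimality, via the semiconcavity and eikonal properties of $f$. This is precisely what \cite{Agr,RT05} supply, and the mechanism that makes everything go through is that differentiability of $f$, equivalently $y\in\Sigma_x$, collapses the set of admissible terminal covectors at $z$ to the single covector $\dd_z f$, leaving no room for a competing (in particular strictly abnormal) minimizer.
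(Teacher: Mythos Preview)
The paper does not prove this proposition: it is stated together with the preceding one under the sentence ``The following two results are taken from \cite{Agr,RT05}'' and no argument is given. So there is no proof in the paper to compare your proposal against; the authors simply import the characterisation from Agrachev and from Rifford--Tr\'elat.

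Your sketch is a faithful outline of how the argument in those references goes, and you are up front about which facts are being imported. Two small cautions. First, the phrase ``the sub-Riemannian energy has a strictly convex integrand in the velocities, exactly as in the Riemannian case'' is misleading: the sub-Riemannian Lagrangian is $+\infty$ off the distribution, so the classical Weierstrass field argument does not transplant verbatim; the correct route is the Hamiltonian one (the central field of normal extremals produces a smooth solution of the Hamilton--Jacobi equation, and a calibration argument then gives minimality among \emph{all} horizontal competitors, not just those close to $\gamma_y$). Second, in the forward direction you should make explicit that the preceding proposition upgrades ``$\d^2$ is $\mathcal{C}^2$ at $(x,y)$'' to ``$z\mapsto\d(x,z)^2$ is $\mathcal{C}^\infty$ near $y$'', since you need smoothness (not merely $\mathcal{C}^2$) of the section $z\mapsto\dd_z f$ to conclude that $p(\cdot)$ is smooth and hence that $\mathcal{E}_x$ admits a smooth local right inverse. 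With those adjustments your outline matches the content of the cited references.
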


As stated in the following lemma it is possible to restrict an ample normal geodesic to a piece where all pairs of points are smooth pairs.

\begin{lem}\label{lem:restr}
Let $\gamma:[t_0,t_1]\to M$ be a normal geodesic that is ample at every time $t\in [t_0,t_1]$. Then for every $s\in [t_0,t_1]$ there exists $\eps>0$ such that $(\gamma(s),\gamma(t))$ is a pair of smooth points for all $t\in [t_0,t_1]$ such that $0<|t-s|< \eps$.
\end{lem}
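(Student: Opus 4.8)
The plan is to reduce the statement to the characterisation of smooth pairs given in Proposition~\ref{pro:charac}. Fix $s\in[t_0,t_1]$. If $\Psi(t)=(\gamma(t),p(t))$ denotes a normal extremal lift of $\gamma$, the one‑parameter group property of the Hamiltonian flow gives $\gamma(s+u)=\mathcal{E}_{\gamma(s)}\bigl(u\,p(s)\bigr)$ for every $u$ with $s+u\in[t_0,t_1]$. Writing $p:=p(s)\in T^*_{\gamma(s)}M$, this says that for $u$ near $0$ the curve $v\in[0,1]\mapsto\mathcal{E}_{\gamma(s)}(v\,up)$ is, after the affine reparametrisation $t=s+uv$, exactly the restriction of $\gamma$ to the interval with endpoints $s$ and $s+u$. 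By Proposition~\ref{pro:charac}, it therefore suffices to produce $\eps>0$ such that, for every $t$ with $0<|t-s|<\eps$ and $q:=(t-s)\,p$, (i) the curve $v\mapsto\mathcal{E}_{\gamma(s)}(vq)$ is the \emph{unique} minimising segment from $\gamma(s)$ to $\gamma(t)$ on $[0,1]$, and (ii) $\mathcal{E}_{\gamma(s)}$ is a submersion at $q$.

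For (i) I would use Lemma~\ref{lem:shorten}: there is $\eps_1>0$ such that $\gamma|_{[s-\eps_1,\,s+\eps_1]}$ is a minimising segment and the only one between its two endpoints. A routine concatenation argument upgrades this to the required local uniqueness: if for some $u$ with $0<|u|<\eps_1$ there were a minimising segment $\sigma$ from $\gamma(s)$ to $\gamma(s+u)$ other than $\gamma|_{[s,s+u]}$, then, reparametrised at the common speed of $\gamma$, it could be spliced together with the complementary pieces of $\gamma$ inside $[s-\eps_1,s+\eps_1]$ to produce a second minimising segment between $\gamma(s-\eps_1)$ and $\gamma(s+\eps_1)$, contradicting Lemma~\ref{lem:shorten}.

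Property (ii) is where ampleness enters and is the crux of the argument. Ampleness at the time $s$ is a condition on the germ of $\gamma$ at $s$ (it is phrased through the spans $\mathscr{F}^{\,i}_{\gamma,s}(s)$ in Definition~\ref{def:special}) and is insensitive to reversing the parameter, so that the reversed normal geodesic $t\mapsto\gamma(2s-t)$ is again ample at $s$; it is thus enough to handle $t>s$. For an ample geodesic, the results of \cite{ABR} on the Jacobi curve associated with $t\mapsto\mathcal{E}_{\gamma(s)}(tp)$ — the same analysis that yields the small‑time volume asymptotics behind the geodesic dimension — show that its first conjugate time is strictly positive; that is, there is $\eps_2>0$ such that $\mathrm{d}_{\,tp}\mathcal{E}_{\gamma(s)}$ is invertible for $0<t<\eps_2$, and since $\dim T^*_{\gamma(s)}M=\dim M=n$ this is exactly the submersion property at $q=(t-s)p$.

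Finally I would take $\eps=\min(\eps_1,\eps_2)$ and conclude: for $0<|t-s|<\eps$, both hypotheses of Proposition~\ref{pro:charac} hold for the covector $q=(t-s)p$, so $\gamma(t)\in\Sigma_{\gamma(s)}$, i.e.\ $(\gamma(s),\gamma(t))$ is a smooth pair. The main obstacle is the positivity of the first conjugate time along a short ample geodesic in (ii): this is the one point that genuinely requires the ample hypothesis together with the Jacobi‑curve machinery of \cite{ABR}, whereas the recentring, the concatenation argument for (i), and the final assembly are all soft.
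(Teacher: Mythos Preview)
Your proof is correct and follows essentially the same route as the paper: reduce to Proposition~\ref{pro:charac}, handle uniqueness via Lemma~\ref{lem:shorten}, and exclude conjugate points via ampleness, after observing that time reversal preserves the ample property so that one may assume $t>s$. Two minor differences are worth noting. First, your concatenation argument in~(i) makes explicit a point the paper takes for granted, namely that uniqueness of the minimising segment between the \emph{endpoints} of $[s-\eps_1,s+\eps_1]$ propagates to subintervals with one end at $s$. Second, for~(ii) the paper's main text argues via the classical fact that on a minimising segment there is at most one point conjugate to the initial endpoint (so any such point is the other endpoint, and all interior $\gamma(t)$ are non-conjugate), whereas you invoke directly that ampleness forces the first conjugate time to be strictly positive; the paper's footnote actually records exactly your variant as an alternative proof (attributing it to \cite[Lemma~71]{BR}: accumulation of conjugate points at $\gamma(s)$ would force abnormality on a short subsegment, contradicting ampleness). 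Both routes are valid and yield the same~$\eps$.
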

\begin{proof}
 We fix $s\in [t_0,t_1]$. The first basic remark is that being a normal geodesic is conserved while reversing the time. This comes from the definition of the normal extremal lift as an Hamiltonian flow. The same remark is true for the reinforced properties strictly normal and ample as can be checked in \cite{ABR}. Therefore it is enough to find $\eps>0$ such that the result holds for all $t\in [t_0,t_1]$ satisfying $0<t-s<\eps$. We can use Lemma \ref{lem:shorten} to restrict the curve so that it is the unique minimizing segment between its ends $\gamma(s)=x$ and $\gamma(t)=\mathcal{E}_x(p)$. In order to find smooth pairs we will apply Proposition \ref{pro:charac}. Therefore we want to prove that $\mathcal{E}_x$ is a submersion at $p$. Recall that $\gamma$ is ample at every point and that hence it is strongly normal. Hence we can follow the argument of \S 3 in the short paper  \cite{Agr} on smooth points: The ``end-point mapping'' is a submersion on each segment $[s,t]$. Due to the theory of conjugate points\footnote{The referee suggested  \cite[Appendix A]{BR} as a more accessible reference on the theory of conjugate points on subRiemannian structures. In a setting including ours, Theorem 72  states that on a minimizing segment there is at most one conjugate point to the first extreme point namely, if it exists, the second extreme point. Lemma 71 provides an alternative simpler proof of Lemma \ref{lem:restr} (the part on smooth points) that contrarily to Theorem 72 is not based on minimality. This alternative proof is based on the fact that if conjugate points to $\gamma(s)$ would accumulate at $\gamma(s)$, the curve restricted to a small segment would be abnormal, a contradiction with the ampleness at every time.} recalled in \cite{Agr}, $\mathcal{E}_{\gamma(s)}$ is also a submersion if and only if $(\gamma(s),\gamma(t))$ are not ``conjugate points''.  According to this theory there is at most one $\bar{t}$ such that $\gamma$ is a minimizing segment on $[s,\bar{t}]$ and $(\gamma(s),\gamma(\bar{t}))$ are conjugate points. Therefore the result holds for $\eps=\bar{t}-s$.
\end{proof}

On $\Sigma\times [0,1]$ we define the \emph{mid point map} $\M$ by $\M(x,y,t)=\mathcal{E}_x(tp)$ where $p$ satisfies $y=\mathcal{E}_x(p)$. It is a smooth map on $\Sigma \times ]0,1[$ as can be seen from the fact that $\varphi:(x,p)\to (x,\mathcal{E}_x(p))$ is a local diffeomorphism at points $(x,p)$ such that $\varphi(x,p)\in \Sigma$. The mid point map satisfies $\M(x,y,0)=x$ and $\M(y,x,t)=\M(x,y,1-t)$. We denote by $\M^t$, $\M^t_{x,.}$ and $\M^t_{.,y}$ the smooth maps defined on $\Sigma$, $\Sigma_x$ and $\Sigma_y$ by $\M^t(x,y)=\M_{x,.}^t(y)=\M_{.,y}^t(x)=\M(x,y,t)$, respectively.

\begin{pro}[Contraction rate, {Lemma 6.27 in \cite{ABR}}]\label{pro:cont}
Let $(x,y)$ be in $\Sigma$ and $p\in T^*_x M$ be such that $y=\mathcal{E}_x(p)$. Assume that the minimizing segment from $x$ to $y$ is included in a single coordinate chart $\{x_i\}_{i=1}^n$. Then there exists a positive $C$ such that $\Jac(\M^t_x)(y)\sim C t^{\mathcal{N}_{x,p}}$ as $t$ goes to zero.
\end{pro}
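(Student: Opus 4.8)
The plan is to reduce the statement to the behaviour of the Jacobian of the exponential map near the zero covector. First I would record a factorisation of the mid point map. Since $(x,y)\in\Sigma$, Proposition \ref{pro:charac} furnishes $p\in T^*_xM$ with $\mathcal{E}_x(p)=y$ such that $\mathcal{E}_x$ is a submersion at $p$; combined with the fact (noted before Proposition \ref{pro:cont}) that $\varphi\colon(x,q)\mapsto(x,\mathcal{E}_x(q))$ is a local diffeomorphism at $(x,p)$, this shows that $\mathcal{E}_x$ restricts to a diffeomorphism from a neighbourhood $V\ni p$ in $T^*_xM$ onto a neighbourhood of $y$. By the very definition of $\M$ one then has, on this neighbourhood of $y$ and for every $t\in\,]0,1[$,
\[\M^t_x=\mathcal{E}_x\circ\sigma_t\circ(\mathcal{E}_x|_V)^{-1},\qquad \sigma_t(q):=tq .\]
I would differentiate this at $y$ and take Jacobian determinants with respect to $\mu$ (on $M$, both at $y$ and at $\mathcal{E}_x(tp)$) and a fixed volume on $T^*_xM$; since $\mathrm d\sigma_t=t\cdot\mathrm{Id}$ on the $n$-dimensional space $T^*_xM$ has determinant $t^{n}$, the chain rule yields
\[\Jac(\M^t_x)(y)=\frac{t^{n}}{\Jac(\mathcal{E}_x)(p)}\,\Jac(\mathcal{E}_x)(tp).\]
Here $\Jac(\mathcal{E}_x)(p)>0$ because $\mathcal{E}_x$ is a local diffeomorphism at $p$, and the prefactor $t^{n}/\Jac(\mathcal{E}_x)(p)$ does not otherwise depend on $t$. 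Using the chart hypothesis to make these determinants globally meaningful, everything is thus reduced to proving $\Jac(\mathcal{E}_x)(tp)\sim C' t^{\,\mathcal{N}_{x,p}-n}$ as $t\to 0^+$ for some $C'>0$. Note that $\mathcal{N}_{x,p}-n=2\sum_{i=1}^m(i-1)(k_i-k_{i-1})$ is a non-negative even integer, so this order is well defined.

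For the remaining asymptotic I would appeal to the linearisation of the Hamiltonian flow along $\gamma\colon t\mapsto\mathcal{E}_x(tp)$. This produces the Jacobi curve $t\mapsto J_\gamma(t)$, a smooth curve of Lagrangian subspaces of $T_{\gamma(0)}(T^*M)$, through which $\mathrm d_{tp}\mathcal{E}_x$ is controlled; ampleness of $\gamma$ at $0$ (so that the growth vector $\{k_1,\dots,k_m\}$ is finite and ends at $n$) makes $J_\gamma$ an ample curve carrying the same flag. In coordinates adapted to this flag the map $\mathrm d_{tp}\mathcal{E}_x$ is, up to strictly higher-order terms, block-diagonal with blocks of sizes $k_i-k_{i-1}$, and the self-dual (symplectic) nature of $J_\gamma$ forces the $i$-th block to contract like $t^{2(i-1)}$ with a non-degenerate leading term. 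Multiplying the blocks gives $\Jac(\mathcal{E}_x)(tp)\sim C' t^{\sum_i 2(i-1)(k_i-k_{i-1})}=C' t^{\,\mathcal{N}_{x,p}-n}$ with $C'>0$, and substituting into the displayed identity yields $\Jac(\M^t_x)(y)\sim C t^{\,\mathcal{N}_{x,p}}$ with $C>0$.

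The hard part is exactly this last asymptotic: pinning down the order of vanishing of $\Jac(\mathcal{E}_x)$ along $\gamma$ in terms of the growth vector, and above all the \emph{non-degeneracy} of the leading coefficient, is where the ample (equivalently strongly normal) hypothesis and the symplectic structure of the Jacobi curve are genuinely used. This is precisely the content of \cite[Lemma 6.27]{ABR}, together with the structural results of \cite[Chapter 5]{ABR} on which it relies, which I would invoke rather than reprove. Everything else — the factorisation of $\M^t_x$ and the bookkeeping with the chain rule above — is elementary.
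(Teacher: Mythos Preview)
The paper does not give its own proof of this proposition: it is stated as a direct quotation of \cite[Lemma~6.27]{ABR} and is used as a black box. Your proposal is a correct unpacking of that reference: the factorisation $\M^t_x=\mathcal{E}_x\circ\sigma_t\circ(\mathcal{E}_x|_V)^{-1}$ and the resulting chain-rule identity are exactly how the reduction goes, and you rightly isolate the nontrivial content as the asymptotic $\Jac(\mathcal{E}_x)(tp)\sim C' t^{\,\mathcal{N}_{x,p}-n}$, which is the substance of \cite[Lemma~6.27]{ABR} and which you also end up invoking rather than reproving. So your approach and the paper's coincide in that both ultimately rest on the same cited lemma; you simply spell out the elementary reduction that the paper leaves implicit. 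One minor remark: the proposition computes the Jacobian in the coordinate chart (i.e.\ with respect to $\lambda_n$), not with respect to $\mu$ as you write, but since $\mu$ has smooth positive density this only affects the constant $C$ and not the exponent.
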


We define also the \emph{inverse geodesic map} by $\I(m,y,t)=\mathcal{E}_m([-t/(1-t)]p)$ where $y=\mathcal{E}_m(p)$, and note it also $\I_m^t(y)$. It is defined for $(m,y)\in \Sigma$ and every $t\neq 1$. Formally the parameter $-t/(1-t)$ is taken such that $\M^t(\I_m^t(y),y)=m$. For this equation to be correct it is sufficient that $\I_m^t(y)\in \Sigma_y$ and $s\in [-t/(1-t),1]\to \mathcal{E}_m(s p)$ is the unique minimizing segment between its ends.
\section{Proof of Theorem \ref{thm:main}}

We prove Theorem \ref{thm:main} after Theorem 1 in \cite{Ju_kyoto} whose proof is based on estimates of the Jacobian determinants of $\M^r_a$ and $\I^r_m$. 
\paragraph{Jacobian determinant of the mid point map}
Let us start with an estimate of the contraction rate along a special normal geodesic. Its length will be chosen short enough to obtain a rate close to a power law.
\begin{pro}\label{pro:jac}
For every $x\in M$, $R>0$ and $\eps>0$ there exists a minimizing segment $\gamma:[s,t]\to M$ such that
\begin{itemize}
\item the range of $\gamma$ is in the ball of center $x$ and radius $R$,
\item $\gamma$ is a subsegment of a normal geodesic, ample at every point,
\item for every $u\in ]s,t[$ the pairs $(\gamma(s),\gamma(u))$, $(\gamma(u),\gamma(t))$ and $(\gamma(s),\gamma(t))$ are smooth,
\item for every $r\in [0,1]$ it holds
\begin{align*}
\left\{
\begin{aligned}
\Jac(\M_{a,\cdot}^r)(b) \leq \left(1+\eps\right)^2r^{\N}\\
\Jac(\M_{b,\cdot}^r)(a) \leq \left(1+\eps\right)^2r^{\N}
\end{aligned}
\right.
\end{align*}
\end{itemize}
where $a=\gamma(s)$ and $b=\gamma(t)$. We recall that $\N$ is the geodesic dimension.

 \end{pro}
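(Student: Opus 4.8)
The plan is to take the ample normal geodesic provided by Proposition~\ref{pro:exists} through the point $x$, and then repeatedly restrict it to shorter and shorter pieces until all the desired properties hold simultaneously. First I would invoke Proposition~\ref{pro:exists} to obtain a normal geodesic $\eta\colon[0,T]\to M$ with $\eta(0)=x$ that is ample at every time. By continuity of $t\mapsto\eta(t)$, there is $\delta>0$ with $\eta([0,\delta])$ contained in the ball $B(x,R)$; replacing $T$ by $\delta$ we may assume the whole curve lies in $B(x,R)$. Next, shrinking once more, Lemma~\ref{lem:shorten} lets me assume that $\eta$ is the unique minimizing segment between any two of its endpoints after a further restriction; combined with Lemma~\ref{lem:restr} applied at an interior base point, I can arrange (after translating and rescaling the parameter interval to some $[s,t]$ with a fixed interior point) that $(\gamma(s),\gamma(u))$, $(\gamma(u),\gamma(t))$ and $(\gamma(s),\gamma(t))$ are smooth pairs for every $u\in\,]s,t[$. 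Here one has to be a little careful: Lemma~\ref{lem:restr} gives smoothness of pairs at \emph{distance at most $\eps$} from a fixed point, so I would pick the base point in the interior, use the lemma on both sides (using reversibility of ample normal geodesics noted in the proof of Lemma~\ref{lem:restr}), and take $[s,t]$ short enough that every relevant pair falls inside the guaranteed neighborhood.

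With smoothness secured, the remaining point is the Jacobian estimate. Let $a=\gamma(s)$, $b=\gamma(t)$, and let $p\in T_a^*M$ be the covector with $b=\mathcal{E}_a(p)$ along this normal geodesic. Since the curve is ample at $s$ with $a=\gamma(s)$, the growth vector $\mathcal{G}_p$ is defined and $\mathcal{N}_{a,p}<\infty$; by definition of the minimal geodesic dimension, $\mathcal{N}_{a,p}\geq\mathcal{N}$. Proposition~\ref{pro:cont} then gives $\Jac(\M^r_{a,\cdot})(b)\sim C r^{\mathcal{N}_{a,p}}$ as $r\to 0$, for some $C>0$, provided the minimizing segment from $a$ to $b$ lies in a single coordinate chart — which I can also ensure by a further shortening at the very start. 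The idea is now purely asymptotic bookkeeping: since $\mathcal{N}_{a,p}\geq\mathcal{N}$ and $r\in[0,1]$, a power $r^{\mathcal{N}_{a,p}}$ is dominated by $r^{\mathcal{N}}$; but the asymptotic $\sim C r^{\mathcal{N}_{a,p}}$ is only valid near $r=0$, so I cannot directly read off a bound uniform over all $r\in[0,1]$ from one fixed curve. The resolution — and this is where the ``length will be chosen short enough'' phrase does its work — is to use the scaling/homogeneity structure: restricting $\gamma$ to $[s,s+\lambda(t-s)]$ for small $\lambda$ rescales the covector $p$ to $\lambda p$, the mid point map composes accordingly, and the constant $C$ and the window of validity of the asymptotic both transform so that after choosing $\lambda$ small one gets $\Jac(\M^r_{a,\cdot})(b)\leq(1+\eps)^2 r^{\mathcal{N}}$ for \emph{all} $r\in[0,1]$, not just small $r$. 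Concretely: shrink $[s,t]$ until on the rescaled curve the ratio $\Jac(\M^r_{a,\cdot})(b)/r^{\mathcal{N}_{a,p}}$ stays within $[(1-\eps)C,(1+\eps)C]$ on all of $[0,1]$, absorb the constant into the choice of normalization, and use $r^{\mathcal{N}_{a,p}}\leq r^{\mathcal{N}}$.

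Finally, the second inequality $\Jac(\M^r_{b,\cdot})(a)\leq(1+\eps)^2 r^{\mathcal{N}}$ follows by the same argument applied at the other endpoint: by the reversibility of normal and ample curves, the time-reversed curve from $b$ to $a$ is again an ample normal geodesic, with its own covector $q\in T_b^*M$ satisfying $\mathcal{N}_{b,q}\geq\mathcal{N}$, so Proposition~\ref{pro:cont} applies verbatim, and we only need the extra restriction to be short enough that \emph{both} rescaled ratios are controlled on $[0,1]$ — which costs nothing since we take the smaller of the two lengths. The main obstacle, I expect, is precisely the passage from the pointwise asymptotic $\Jac(\M^r_{a,\cdot})(b)\sim C r^{\mathcal{N}_{a,p}}$ as $r\to 0$ (Proposition~\ref{pro:cont}) to a clean inequality valid for \emph{every} $r\in[0,1]$; making the rescaling argument rigorous — tracking how the constant $C$, the error term, and the validity window transform under restriction of the geodesic, and checking that a single small $\lambda$ handles both endpoints and all $r$ — is the genuine content, whereas the smoothness-of-pairs and containment-in-a-ball conditions are routine consequences of Lemmas~\ref{lem:shorten} and~\ref{lem:restr} and continuity.
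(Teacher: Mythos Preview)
Your one-sided argument is essentially the paper's: fix $a=\gamma(s)$, use the asymptotic $\Jac(\M^r_{a,\cdot})(y)\sim C r^{\mathcal N_{a,p}}$ from Proposition~\ref{pro:cont}, pick $\alpha$ so that the ratio $\Jac(\M^r_{a,\cdot})(y)/r^{\mathcal N_{a,p}}$ stays in $[C(1+\eps)^{-1},C(1+\eps)]$ on $[0,\alpha]$, and then use the composition identity $\M^r_{a,\cdot}=\M^{\alpha r}_{a,\cdot}\circ(\M^\alpha_{a,\cdot})^{-1}$ to get $\Jac(\M^r_{a,\cdot})(b)\leq(1+\eps)^2 r^{\mathcal N_{a,p}}\leq(1+\eps)^2 r^{\mathcal N}$ for all $r\in[0,1]$, where $b=\M^\alpha_{a,\cdot}(y)$. (Your phrase ``absorb the constant into the choice of normalization'' is a bit loose: the point is that $C$ \emph{cancels} in the ratio, not that it is normalized away.)

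The genuine gap is your last paragraph. ``Take the smaller of the two lengths'' does not work, because the two restrictions are anchored at \emph{different, moving} endpoints. Concretely: the forward estimate you obtained is valid for $a=\gamma(s)$ fixed and any right endpoint $\gamma(u)$ with $s<u\leq s+\zeta(s)$; the backward estimate (from reversibility and Proposition~\ref{pro:cont} at $b$) is valid for $b=\gamma(t)$ fixed and any left endpoint $\gamma(v)$ with $t-\eta(t)\leq v<t$. To have \emph{both} inequalities on a single segment $[s,t]$ you need $t-s<\zeta(s)$ \emph{and} $t-s<\eta(t)$ simultaneously. But $\zeta$ and $\eta$ carry no regularity whatsoever, so after you shrink from $a$'s side (replacing $b$ by some closer $b'$), the threshold $\eta(b')$ at the \emph{new} right endpoint may force you to move $a$, which in turn changes $\zeta$, and there is no reason this back-and-forth terminates. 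The same issue already arises for the smoothness conditions $(\gamma(s),\gamma(u)),(\gamma(u),\gamma(t))\in\Sigma$: Lemma~\ref{lem:restr} gives a radius depending on the base point, and you need the radii at $s$ and at $t$ to be compatible.

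The paper isolates exactly this difficulty as Lemma~\ref{lem:N}: given arbitrary positive functions $\zeta,\eta$ on $]0,T[$, there exist $s<t$ with $t\in\,]s,s+\zeta(s)[$ and $s\in\,]t-\eta(t),t[$. The proof is a short cardinality argument (some level set $\{\zeta>1/i\}\cap\{\eta>1/j\}$ is uncountable, hence has a two-sided accumulation point). You should either invoke this lemma, or supply an alternative argument---for instance, show that $\zeta$ (defined via the open set $\Sigma$ and the continuous Jacobian) is lower semicontinuous, which would also suffice.
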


\begin{proof}

Let $x$ be a point of $M$ and $R>0$, $\eps>0$ as in the statement. Proposition \ref{pro:exists} provides $p_0\in T^*_x M$ such that $\gamma:t\mapsto \mathcal{E}_x(t p_0)$ is a normal geodesic that is ample at any point of $[0,T_0]$.  We restrict the interval to $[0,T]\subset [0,T_0]$ so that the support of the curve $\gamma([0,T])$ is in the ball of center $x$ and radius $R$.

For some $s\in ]0,T[$ we set $a=\gamma(s)$. Lemma \ref{lem:restr} provides $t\in [0,T]\setminus \{s\}$ such that $\gamma(u)\in \Sigma_a$ for every $u\in ]s,t]$ (or $u\in [t,s[$ if $t<s$). We set $y=\gamma(t)=\mathcal{E}_a(p)$. Proposition \ref{pro:cont} yields
\begin{align*}
\Jac(\M_{a,.}^r)(y)\sim_{r\to 0^+} C r^{\N_{a,p}}.
\end{align*}
Let $\alpha\in [0,1]$ be such that for every $r\in [0,\alpha]$,
\begin{align*}
C(1+\eps)^{-1}r^{\N_{a,p}}\leq \Jac(\M^r_{a,.})(y) \leq C(1+\eps)r^{\N_{a,p}}.
\end{align*}
Let $b$ be $\gamma(s+\alpha (t-s))=\mathcal{E}_a(\alpha p)=\M^\alpha_{a,.}(y)$. For every $r\in [0,1]$ we have $\M^r_{a,.}=\M^{\alpha r}_{a,.}\circ (\M^\alpha_{a,.})^{-1}$. Hence, differentiating at $b$ we obtain
\begin{align*}
\Jac(\M^r_{a,\cdot})(b)=\Jac(\M^{\alpha r}_{a,\cdot})(y)\cdot\Jac(\M_{a,\cdot}^{\alpha})(y)^{-1}.
\end{align*}
For every $r\in [0,1]$, it implies
\begin{align}\label{eq:N}
\Jac(\M_{a,\cdot}^r)(b) \leq \left(1+\eps\right)^2r^{\N_{a,p}}.
\end{align}
Note that this estimate is still correct for $b=\gamma(u)=\M^{\alpha'}_{x,.}(y)$ where $u=s+\alpha'(t-s)$ and $\alpha'<\alpha$. Moreover it does not depend on the relative position of $a=\gamma(s)$ and $y=\gamma(t)$. Therefore, for every $s\in ]0,T[$ there exists positive functions $\eta$ and $\zeta$ such that the minimizing segments parametrized on $[s-\eta(s),s]$ and $[s,s+\zeta(s)]$ both let appear an estimate corresponding to \eqref{eq:N}, with $\gamma(s)$ in the role of $a$, point $\gamma(u)$ with $u\neq s$ such that $-\eta(s) \leq u-s\leq \zeta(s)$ in the role of $b$ , and possibly another value for $\mathcal{N}_{a,p}$. With Lemma \ref{lem:restr} we have also made it possible to assume that $(\gamma(s),\gamma(u))$ is a smooth pair. 

Thanks to these remarks and Lemma \ref{lem:N} below we can finally assume that on some minimizing subsegment $\gamma([s,t])$ of $\gamma([0,T])$ equation \eqref{eq:N} is not only satisfied for $a=\gamma(s)$ together with the contraction family $(\M^r_{a,.}(b))_{r\in [0,1]}$, but also for $b=\gamma(t)$ and $(\M^r_{b,.}(a))_{r\in [0,1]}$. Moreover for every $m$ strictly in the geodesic segment $[a,b]\subset U$ all the pairs $(a,m)$, $(m,b)$ and $(a,b)$ are smooth.
\end{proof}

The following lemma is used in the proof of Proposition \ref{pro:jac}. We found it interesting enough to isolate it.

\begin{lem}\label{lem:N}
Let $\eta$ and $\zeta$ be two positive functions on $]0,T[$. Then there exists $[s,t]\subset ]0,T[$ (with for instance $s<t$) such that $t\in]s,s+\zeta(s)[$ and $s\in ]t-\eta(t),t[$ (so that $[s,t]\subset ]t-\eta(t),s+\zeta(s)[$). 
\end{lem}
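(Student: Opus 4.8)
The plan is to prove Lemma \ref{lem:N} by a simple fixed-point/accumulation argument based on iterating the two interval constraints. Intuitively, starting from an arbitrary $s_0 \in\ ]0,T[$ we want to find a nearby $t$ with $t < s_0 + \zeta(s_0)$ while simultaneously $s_0 > t - \eta(t)$; the difficulty is that $\eta(t)$ could be very small for the particular $t$ we pick, so the second inequality might fail. The way around this is to not fix $s$ first and then chase $t$, but rather to look for a pair where both functions are, roughly speaking, ``not too small simultaneously,'' which one obtains by a continuity/measure-type argument or by a nested-interval construction.

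Concretely, here is the construction I would carry out. Fix any compact subinterval $[c,d]\subset\ ]0,T[$. First I would pass to a sub-subinterval on which $\eta$ is bounded below: since $\eta>0$ on $]0,T[$, the sets $U_k=\{x\in [c,d]:\eta(x)>1/k\}$ increase to $[c,d]$, so by Baire (or simply because a countable union of sets cannot all be nowhere dense and cover an interval) some $U_k$ is dense in some open subinterval $J\subset [c,d]$; actually it is cleaner to argue that some $U_k$ contains two points $s<t$ of $J$ with $t-s<1/k$ and $t-s<\zeta(s)$. To make that last clause work I would first shrink $J$ so that additionally $\zeta$ is bounded below on a dense subset of $J$ by the same kind of argument, intersecting the two families. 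Then within the common subinterval $J$ pick any $s$ in the dense good set with $\zeta(s)>1/k$ and $\eta(s)>1/k$; since the good set is dense, choose $t$ in it with $s<t<s+\min(1/k,\zeta(s))$ and moreover $t-s<1/k<\eta(t)$. This gives $t\in\ ]s,s+\zeta(s)[$ and $s\in\ ]t-\eta(t),t[$, i.e.\ $[s,t]\subset\ ]t-\eta(t),s+\zeta(s)[$, as required.

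An even slicker route, which I would probably prefer to write up, avoids Baire entirely: define $\phi(x)=\min(\eta(x),\zeta(x))>0$ on $]0,T[$ and set $E_k=\{x:\phi(x)>1/k\}$, so $\bigcup_k E_k=\ ]0,T[$. By countable subadditivity of Lebesgue measure (or outer measure), since $]0,T[$ has positive measure, some $E_k$ has positive outer measure, hence is not ``thin'': it contains two points $s<t$ with $0<t-s<1/k$ (indeed any set of positive outer measure has points arbitrarily close together — even simpler, a set of positive outer measure is infinite, and inside a bounded interval an infinite set has points within any prescribed distance). For such $s,t\in E_k$ with $t-s<1/k$ we get $t-s<1/k<\zeta(s)$ and $t-s<1/k<\eta(t)$, which is exactly the assertion of the lemma.

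The main obstacle here is purely one of presentation: the statement is elementary but the naive ``pick $s$, then pick $t$'' order of quantifiers does not work because $\eta$ is only pointwise positive with no uniform lower bound, so one must extract a set on which $\eta$ and $\zeta$ are \emph{jointly} bounded below and then choose both endpoints inside that set. Once the set $E_k$ of positive measure is identified, the rest is immediate. I would present the measure-theoretic version since it is the shortest and requires nothing beyond countable subadditivity of Lebesgue outer measure and the trivial fact that an infinite subset of a bounded interval has two elements within any given positive distance.
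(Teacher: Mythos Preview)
Your second (``slicker'') argument is correct and is essentially the same as the paper's proof: both extract a set on which $\min(\eta,\zeta)>1/k$ that is large enough to contain two points within distance $1/k$, and then those two points do the job. The only cosmetic difference is that the paper uses cardinality (some level set $F_{i,j}=\{\zeta>1/i\}\cap\{\eta>1/j\}$ is uncountable, hence contains two points arbitrarily close) whereas you use Lebesgue outer measure for the same purpose; your one-step use of $\phi=\min(\eta,\zeta)$ is marginally tidier than the paper's two-step $F_i\supset F_{i,j}$, but the idea is identical.
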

\begin{proof}

Consider $F_i=\{s: \zeta(s)>1/i\}$. There exists an integer $i$ such that the cardinal of $F_i$ is non countable. Introduce now $F_{i,j}=\{t\in F_i: \eta(t)>1/j\}$. It is also non countable if $j$ is great enough. Therefore the set $F_{i,j}$ has a point $u\in]0,T[$ with accumulation on both sides. We conclude by choosing $t\in F_{i,j}\cap ]u,u+1/2k[$ and $s\in F_{i,j}\cap ]u-1/2k,u[$ where $k\geq \max(i,j)$.
\end{proof}

\paragraph{Jacobian determinant of the geodesic inverse} \label{egal_un}
We derive here an estimate for the geodesic inverse that enhances the results obtained in the previous paragraph. Therefore, let $a=\gamma(s)$ and $b=\gamma(t)$ be as obtained in Proposition \ref{pro:jac} and let $m=\M(a,b,r)$ be a mobile point moving with $r\in [0,1]$. From $\M^r(\I_m^r(y),y)=m$ that we write for $y$ in a neighborhood of $b$ the differentiation at point $b$ yields
\begin{align*}
\Jac \M^r_{\cdot,b}(a) \Jac\I_m^r(b)=\Jac\M^r_{a,\cdot}(b).
\end{align*}
Recall that $\M_{.,b}^r=\M_{b,.}^{1-r}$. As $r$ tends to $1$, $\Jac(\M^r_{.,b})(a)$ and $\Jac(\M^r_{a,.})(b)$ tend to zero or one, respectively. As $r$ tends to $0$ the limits are the same but in the other order. Hence $r\mapsto \Jac\I^r_m(b)$ has limits $0$ and $+\infty$ at the bounds of  $]0,1[$. As it is a continuous function of $r$ the intermediate value theorem yields 
\begin{align}\label{eq:O}
\Jac\I_{m}^r(b)=1
\end{align}
for some $r\in]0,1[$.

\paragraph{Geometric construction}

Before the proof of Theorem \ref{thm:main} we briefly recall the principle of proof in \cite{Ju_kyoto} that together with \eqref{eq:N} and \eqref{eq:O} exhibits sets $A$ and $B$. The present proof shall be slightly different but both proofs share the same geometric meaning. The sets $A$ and $B$ may be seen as two small pearls at the end of a minimizing segment with ends $a,\,b$ and point $m=\M^r(a,b)$ selected according to the previous paragraphs. An essential conclusion of Proposition \ref{pro:jac} is that we can assume $r\leq 1/2$ (if $r>1/2$, using the fact that our statements are symmetric in $a$ and $b$ we swap the labelling of $a$ and $b$ so that $r$ is replaced by $1-r\leq 1/2$). In the coordinate chart the set $B=\mathcal{B}(b,\rho)$ is a (small) Euclidean ball of center $b$ and radius $\rho$ and $A=\I_m^r(B_\rho)$ is its image by $\I_m^r$. Radius $\rho$ tends to zero so that we can analyse volumes at first order as if they were the images of affine maps. In particular, since $\I_m^r$ has Jacobian $\pm1$ the ratio $\lambda_n(A)/\lambda_n(B)$ tends to 1 as $\rho$ goes to zero.  For the same reason the set $A=\I^r_m(B)$ looks like an ellipso\"id and the mid set $\M^r(A,B)$ looks like an ellipso\"id as well, the volume of which is smaller than $r^\mathcal{N}2^n\lambda_n(B)\leq 2^{n-\mathcal{N}}\lambda_n(B)$.  The geodesic dimension $\N$ is a lower bound on the power law appearing in the contraction of center $a$ and ratio $r$ whereas $2^n$ comes from the fact that at first order the mid point of $(p,q)\in A\times B$ is determined by $q-\I_m^{1-r}(p)\in \mathcal{B}(0,2\rho)$ a ball whose volume is $2^n\lambda_n(B_\rho)$.


\begin{proof}[Proof of Theorem \ref{thm:main}]
Let $\eps>0$ be a positive parameter. We decompose it introducing $\eps_1,\,\eps_2\in ]0,\eps]$ such that  $(1+\eps_1)(1+\eps_2)\leq 1+\eps/2$. Let $x$ be a point of $M$ and $U$ be a neighborhood of $x$ small enough to be contained in a single coordinate chart $\{x_i\}_{i=1}^n$ where the Lebesgue mesure $\lambda_n$ satisfies $\left(\sup_U d\mu/d\lambda_n\right) \left(\sup_U d\lambda_n/d\mu\right)< 1+\eps_1$. Thus the estimates for $\mu$ and $\eps$ in Theorem \ref{thm:main} are satisfied if we manage to prove the following ones for $\lambda_n$ and $\eps_2$, namely $\lambda_n(A)\in [(1-\eps_2)\lambda_n(B),(1+\eps_2)\lambda_n(B)]$ and $2^{\mathcal{N}-n}\lambda_n(M_r)\leq \lambda_n(B)(1+\eps_2)$. 

Let $R$ be a radius such that the ball of center $x$ and radius $R$ is contained in $U$. The sets $A$ and $B$ will be subsets of the ball. Note that $R$ can be arbitrarily reduced to satisfy the last statement in Theorem \ref{thm:main} on the size of $\mathrm{diam}(A\cup B)$. We apply Proposition \ref{pro:jac} and the conclusion of the previous paragraph ``Jacobian determinant of the geodesic inverse'' to obtain three points $a,b,m\in\mathcal{B}(x,r)\subset U$ (for some $r\leq 1/2$, see the paragraph before the proof) that satisfy the properties listed there, in particular  \eqref{eq:N} that we take for $(1+\eps_2/2)$ in place of $(1+\eps)^2$, and \eqref{eq:O}.

In the coordinate chart we set $B_\rho=\mathcal{B}(b,\rho)$ the (small) Euclidean ball of center $b$ and radius $\rho$ and $A_\rho=\I_m^r(B_\rho)$ its image by $\I_m^r$. Since $\I_m^r$ has Jacobian $\pm1$ the ratio $\lambda_n(A_\rho)/\lambda_n(B_\rho)$ tends to 1 as $\rho$ goes to zero. In particular with respect to the first statement of Theorem \ref{thm:main} it is in $[1-\eps,1+\eps]$ for any $\rho$ small enough.

We set $F:(p,q)\in B_\rho\times B_\rho\mapsto \mathcal{M}^r(\mathcal{I}_m^r(p),q)$. Note $\mathcal{I}_m^r(B_\rho)=A_\rho$ and $F(B_\rho,B_\rho)=M_r$. The first order Taylor formula of $F$ at $(b,b)$ yields
\begin{align}\label{eq:clef}
\|(F(p,q)-m)-DF(b,b)\cdot(p-b,q-b)\|
\leq o(\|p-b\|+\|q-b\|)
\end{align}
where $(p,q)$ goes to $(b,b)$. As $F(p,p)=m$ for every $p\in B_\rho$ we get $DF(b,b)\cdot(h,h)=0$ for every vector $h$. Differentiating $F$ with the chain rule, this yields $D(\M^r)_1(a,b)\circ D\mathcal{I}_m^r(b)+D(\M^r)_2(a,b)=0$. Therefore, from \eqref{eq:clef} we derive
\begin{align*}
\|(\M^r(\mathcal{I}_m^r(p),q)-m)-D(\M^r)_2(a,b)\cdot(q-p)\|\leq o(\|p-b\|+\|q-b\|).
\end{align*}
As $(p,q)$ runs into $B_\rho\times B_\rho$, the pair $(\mathcal{I}_m^r(p),q)$ describes $A_\rho \times B_\rho$ and the vector $q-p$ describes $\mathcal{B}(0,2\rho)$. With the last equation we see that 
\[\M^r(\mathcal{I}_m^r(p),q)\in m+D(\M^r)_2(a,b)\cdot\mathcal{B}(0,2\rho)+\mathcal{B}(0,f(\rho))\]
where $f(\rho)=o(\rho)$ as $\rho$ goes to zero. For the Lebesgue measure $\lambda_n$ the volume on the right is equivalent to the one of $D(\M^r)_2(a,b)\cdot\mathcal{B}(0,2\rho)$ that, due to \eqref{eq:N}, is smaller than $2^nr^\N(1+\eps_2/2)\lambda_n(\mathcal{B}(0,\rho))$. For $\rho$ small enough this yields $\lambda_n(F(B_\rho,B_\rho))\leq 2^{\N-n}(1+\eps_2)\lambda_n(B_\rho)$. As we noticed above, this it is enough to conclude the proof.

\end{proof}

\section{Brunn--Minkowski inequalities and spaces with Ricci curvature bounded from below}\label{sec:conseq}

In this section we extend the meaning of $\M^t$ to a geodesic metric space $(\X,\d)$ and make it a multi-valued map.
\[\M^t(a,b)=\{m\in \X:\,\d(a,m)=t\d(a,b)\text{ and }\d(m,b)=(1-t)\d(a,b)\}\]
We denote by $\M^t(A,B)$ the set $\M^t(A,B)=\bigcup_{a\in A,\,b\in B}\M^t(a,b)$. Let $\mu$ be a Borel measure on $\X$ and $K\in \R$,\;$N\geq 1$ be curvature and dimension parameters. The metric measure space is said to satisfy the Brunn--Minkowski inequality of parameters $(K,N)$ if for any Borel sets $A$ and $B$ and $t\in[0,1]$ the measures of $A$, $B$ and $\M^t(A,B)$ satisfy the following inequality
\[\mu(\M^t(A,B))^{1/N}\geq \tau^{K,N}_{(1-t)}(\Theta)\mu(A)^{1/N}+\tau^{K,N}_t(\Theta)\mu(B)^{1/N}.\]
On the right-hand side if $K<0$, we have to take
\[
\tau^{K,N}_t(\Theta)=t^{1/N}\left(\frac{\sinh(t\Theta\sqrt{-K/(N-1)})}{\sinh(\Theta\sqrt{-K/(N-1)})}\right)^{1-1/N}
\]
where $\Theta=\sup_{(x,y)\in A\times B}\d(x,y)$. Other expressions are given for $K\geq 0$ but we only stress the fact that the Brunn--Minkowski  $\mathsf{BM}(K',N)$ implies $\mathsf{BM}(K,N)$  for every $K\leq K'$. 

Note that, for $K<0$, the coefficient $\tau^{K,N}_t(\Theta)$ tends to $t$ as $\Theta$ goes to zero. Thus, for $\Theta$ small enough, and $\mu(A)\simeq \mu(B)$ we see that $\mu(\M^t(A,B))$ should have its measure greater than or equal to $\mu(A)(1-\eps)$. This is independent from the dimension. Therefore, adapting the size parameter $R$ in Theorem \ref{thm:main} we obtain the corollary that $\mathsf{BM}(K,N)$ is not satisfied for any $K<0$ and thus any $K\in \R$.  For $K\geq 0$ we can alternatively apply verbatim the argument above since the limit $\tau^{K,N}_t(\Theta)\to_{\Theta\to 0} t$ also holds in this case. One can understand that Brunn--Minkowski inequalities are not satisfied from the fact that they were extrapolated from the Riemannian geometry. In particular $\mathsf{BM}(K,N)$ is satisfied by Riemannian manifolds with dimension lower or equal to $N$ and lower bound  $K$ on the Ricci curvature, by manifolds with generalized Ricci tensor greater than $K$, or by $\mathsf{RCD}(K,N)$ spaces, i.e. \emph{Riemannian curvature-dimension} spaces.  Hence the spaces $\X$ considered in the present paper are not $\mathsf{RCD}(K,N)$ spaces. This result is also proved in a recent paper by Huang and Sun \cite{HS} comparing the tangent spaces to $\mathsf{RCD}$ spaces with Carnot groups, the tangent spaces to subRiemannian structures. As stated in Corollary \ref{coro:coro} the intermediate property -- also implying $\mathsf{BM}(K,N)$ -- to be a $\mathsf{CD}(K,N)$ space is not satisfied as well. This can not be proved with the approach of Huang and Sun because these authors rely on infinitesimal Hibertianity, a property that is not required in $\mathsf{CD}(K,N)$ spaces. 

Inequalities $\mathsf{BM}(K,N)$ are not satisfied on subRiemannian structures. However, as recently proved by Balogh, Krist\'aly and Sipos other types of Brunn--Minkowski inequalities can be satisfied. This is proved with two different approaches in \cite{BKS1,BKS2}. These new inequalities involve sharp distortion coefficients adapted to the subRiemmannian geometries of the Heisenberg groups and of corank 1 Carnot groups -- notice that these coefficients also appeared in \cite{Ju_imrn,Riz} in the special case where $A$ is a single point. In an even more recent paper Barilari and Rizzi extend these Brunn--Minkowski inequalities to ideal subRiemannian manifolds, see \cite{BR}. In place of new distortion parameters one can alternatively relax the $\mathsf{CD}(K,N)$ inequalities through a simple multiplicative coefficient as Milman does in \cite{Mil} yielding ``quasi'' Brunn--Minkowski inequalities. Let us finally recall that modified versions of the $\mathsf{CD}(K,N)$ condition were proved for operators on special classes of subRiemannian structures \cite{BG}.

\bibliographystyle{abbrv}
\bibliography{basebib_killbm}
\Addresses
\end{document}